\documentclass[reqno]{amsart}\usepackage{amsmath}

\usepackage{amsfonts}
\usepackage{amssymb}
\usepackage{hyperref}

\setcounter{MaxMatrixCols}{10}

\begin{document}
\title[Existence of positive solutions]{A note on existence results for a nonlinear fourth-order integral boundary value problem}
\author[F. Haddouchi]{Faouzi Haddouchi}

\address{F. H:
Department of Physics, University of Sciences and Technology of Oran-MB, El Mnaouar, BP 1505, 31000 Oran, Algeria.}
\address{Laboratory of Fundamental and Applied Mathematics of Oran (Lmfao), Department of Mathematics, University of Oran 1 Ahmed Benbella, 31000 Oran,
Algeria}
\email{fhaddouchi@gmail.com}
\subjclass[2000]{34B15, 34B18, 34C25}
\keywords{Positive solutions; Leray-Schauder fixed point theorem;
fourth-order integral boundary value problems; existence; cone}

\begin{abstract}
In this short note, we present some new existence results for a nonlinear
fourth-order two-point boundary value problem with integral condition. The
existence results are obtained by using the Leray-Schauder fixed point
theorem. Our work improves the main results of Benaicha and Haddouchi \cite%
{Benai}. In addition, examples are included to show the validity of our results.
\end{abstract}

\maketitle \numberwithin{equation}{section}
\newtheorem{theorem}{Theorem}[section]
\newtheorem{lemma}[theorem]{Lemma} \newtheorem{proposition}[theorem]{%
Proposition} \newtheorem{corollary}[theorem]{Corollary} \newtheorem{remark}[%
theorem]{Remark}
\newtheorem{exmp}{Example}[section]
\section{Introduction}
Fourth-order ordinary differential equations are models for bending or
deformation of elastic beams, and therefore have important applications in
engineering and physical sciences. Recently, the two-point and multi-point
boundary value problems for fourth-order nonlinear differential equations
have received much attention from many authors. Many authors have studied
the beam equation under various boundary conditions and by different
approaches. We refer the readers to the papers \cite{Alves, Graef1, Graef2,
Hend, Kos, Ma1, Ma2, Ping, Webb, Yao, Yang, Zhang, Graef3, Ander, Han, Sun,
Shen, Benai}.

In 2016, Benaicha and Haddouchi \cite{Benai}, by applying the
Krasnoselskii's fixed point theorem in cones, established the existence of
positive solutions for the following fourth-order two-point boundary value
problem (BVP) with integral boundary condition:

\begin{equation}  \label{eq-1.1}
{u^{\prime \prime \prime \prime }}(t)+f(u(t))=0,\ t\in(0,1),
\end{equation}
\begin{equation}  \label{eq-1.2}
u^{\prime}(0)=u^{\prime}(1)=u^{\prime \prime}(0)=0, \
u(0)=\int_{0}^{1}a(s)u(s)ds,
\end{equation}
where

\begin{itemize}
\item[(H1)] $f\in C([0,\infty),[0,\infty))$;

\item[(H2)] $a\in C([0,1],[0,\infty))$ and $0<\int_{0}^{1}a(s)ds<1$.
\end{itemize}

To obtain the existence of at least one positive solutions for this problem,
they assumed that the nonlinear term $f$ is either superlinear or sublinear.
That is, defining
\begin{equation*}
f_{0}=\lim_{u\to 0+}\frac{f(u)}{u}, \ \ f_{\infty}=\lim_{u\to\infty}\frac{%
f(u)}{u},
\end{equation*}

then, $f_{0}=0$ and $f_{\infty}=\infty$ correspond to the superlinear case,
and $f_{0}=\infty$ and $f_{\infty}=0$ correspond to the sublinear case.

This note applies the Leray-Schauder fixed point theorem to eliminate half
of the assumptions to prove the existence of a solution when using Krasnoselskii's fixed point
theorem of norm type with super and sub linear hypotheses.

Motivated by the work mentioned above, the aim of this note is to improve
the results in \cite{Benai} by showing that the BVP \eqref{eq-1.1} and %
\eqref{eq-1.2} has at least a positive solution if $f_{0}=0$ (condition $%
f_{\infty}=\infty$ being unnecessary), as well as, for $f_{\infty}=0$
(condition $f_{0}=\infty$ being also unnecessary).

For our analysis we use the Leray-Schauder's fixed point theorem.

\begin{lemma}[\cite{Gran}]
(Leray-Schauder) \label{lem 1.1} Let $\Omega$ be a convex subset in a Banach
space $X$, $0\in \Omega$ and assume that $A : \Omega \rightarrow \Omega$ is
a completely continuous operator. Then, either (i) $A$ has at least one
fixed point in $\Omega$; or (ii) the set $\{x\in \Omega / x=\lambda Ax,
0<\lambda<1\}$ is unbounded.
\end{lemma}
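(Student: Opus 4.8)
The plan is to derive this nonlinear alternative from Schauder's fixed point theorem by a truncation (radial retraction) argument. I would set up the dichotomy directly: assume that alternative (ii) fails, that is, the set $S=\{x\in\Omega : x=\lambda Ax \text{ for some } \lambda\in(0,1)\}$ is bounded, and from this deduce (i). Since $S$ is bounded I can fix $R>0$ with $\|x\|<R$ for every $x\in S$, and then consider $K=\Omega\cap\overline{B}(0,R)$, where $\overline{B}(0,R)=\{x\in X:\|x\|\le R\}$. This $K$ is nonempty (it contains $0$), closed, bounded and convex, which is exactly the kind of set Schauder's theorem requires.

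The key construction is a truncated operator $A^{\ast}:K\to K$ defined by $A^{\ast}x=Ax$ when $\|Ax\|\le R$, and $A^{\ast}x=(R/\|Ax\|)\,Ax$ when $\|Ax\|>R$. Here I would use crucially that $0\in\Omega$ and that $\Omega$ is convex: when $\|Ax\|>R$ the scalar $R/\|Ax\|$ lies in $(0,1)$, so $(R/\|Ax\|)\,Ax$ is a convex combination of $0$ and $Ax\in\Omega$ and hence again lies in $\Omega$, while its norm is exactly $R$. Thus $A^{\ast}$ genuinely maps $K$ into $K$. Because $A$ is completely continuous and $K$ is bounded, $A(K)$ is relatively compact, and the truncation preserves both continuity and relative compactness, so $A^{\ast}$ is completely continuous on $K$.

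Next I would apply Schauder's fixed point theorem to $A^{\ast}$ on the closed, bounded, convex set $K$, obtaining $x^{\ast}\in K$ with $A^{\ast}x^{\ast}=x^{\ast}$, and then split into two cases following the definition of $A^{\ast}$. If $\|Ax^{\ast}\|\le R$, then $A^{\ast}x^{\ast}=Ax^{\ast}$, so $x^{\ast}=Ax^{\ast}$ is a genuine fixed point of $A$ and alternative (i) holds. If instead $\|Ax^{\ast}\|>R$, then setting $\lambda=R/\|Ax^{\ast}\|\in(0,1)$ yields $x^{\ast}=\lambda Ax^{\ast}$ with $\|x^{\ast}\|=\lambda\|Ax^{\ast}\|=R$, so $x^{\ast}\in S$; but this contradicts the choice of $R$, which forces $\|x\|<R$ on $S$. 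Hence the second case is impossible, and $A$ must possess a fixed point.

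The step I expect to be the main obstacle is verifying that $A^{\ast}$ is a well-defined completely continuous self-map of $K$; this is precisely where the hypotheses $0\in\Omega$ and the convexity of $\Omega$ are indispensable, since they guarantee that the radial contraction of $Ax$ toward the origin remains inside $\Omega$. Once $A^{\ast}$ is seen to satisfy the hypotheses of Schauder's theorem, the remainder is the clean case analysis above, with the contradiction in the second case ruling out the ``fixed point landing on the sphere of radius $R$'' scenario. Since this lemma is quoted from \cite{Gran}, in the paper itself it is simply invoked rather than reproved.
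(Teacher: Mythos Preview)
Your argument is correct: the radial retraction of $A$ onto $K=\Omega\cap\overline{B}(0,R)$ followed by Schauder's theorem is the standard derivation of this nonlinear alternative, and you have handled the key points (convexity plus $0\in\Omega$ to keep the retraction inside $\Omega$, complete continuity of $A^{\ast}$, and the contradiction when the fixed point lands on the sphere). As you yourself observe in the final sentence, the paper does not prove this lemma at all; it is quoted from \cite{Gran} and used as a black box, so there is no proof in the paper to compare against.
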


\section{Some preliminary results}
In order to prove our main results, we need some preliminary results.
Consider the following two-point boundary value problem

\begin{equation}  \label{eq-2.1}
{u^{\prime \prime \prime \prime }}(t)+y(t)=0,\ t\in(0,1),
\end{equation}
\begin{equation}  \label{eq-2.2}
u^{\prime}(0)=u^{\prime}(1)=u^{\prime \prime}(0)=0, \
u(0)=\int_{0}^{1}a(s)u(s)ds.
\end{equation}

For problem \eqref{eq-2.1}, \eqref{eq-2.2}, we have the following
conclusions which are derived from \cite{Benai}.

\begin{lemma}\emph{(\cite[Lemma 2.2]{Benai})}
\label{lem 2.1} The problem \eqref{eq-2.1}-\eqref{eq-2.2} has a unique
solution
\begin{equation*}
u(t)=\int_{0}^{1}\Big(G(t, s)+\frac{1}{1-\alpha}\int_{0}^{1}a(\tau)G(\tau,
s)d\tau \Big)y(s)ds,
\end{equation*}
where $G(t, s):[0, 1]\times[0, 1]\rightarrow \mathbb{R}$ is the Green's
function defined by
\begin{equation}  \label{eq-2.3}
G(t, s)=\frac{1}{6}%
\begin{cases}
t^{3}(1-s)^{2}-(t-s)^{3}, & 0\leq s\leq t \leq1; \\
t^{3}(1-s)^{2}, & 0\leq t \leq s\leq 1,%
\end{cases}%
\end{equation}
and
\begin{equation*}
\alpha=\int_{0}^{1}a(t)dt.
\end{equation*}
\end{lemma}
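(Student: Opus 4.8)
The plan is to solve the linear problem \eqref{eq-2.1}--\eqref{eq-2.2} directly by integrating $u''''=-y$ four times and fixing the constants of integration from the boundary data. Three of the four conditions in \eqref{eq-2.2}, namely $u'(0)=u'(1)=u''(0)=0$, are pointwise and I would impose them first; the remaining nonlocal condition $u(0)=\int_0^1 a(s)u(s)\,ds$ is used at the very end, and this is precisely where hypothesis (H2) enters to guarantee solvability and uniqueness.

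First I would set $A:=u'''(0)$ and integrate successively from $0$. Using $u''(0)=0$ together with repeated integration and interchange of the order of integration (equivalently, the Cauchy formula $\int_0^t\int_0^\sigma y(r)\,dr\,d\sigma=\int_0^t(t-r)y(r)\,dr$), I obtain a closed form for $u''$ and then for
\begin{equation*}
u'(t)=\frac{A}{2}t^{2}-\frac12\int_0^t (t-r)^2 y(r)\,dr.
\end{equation*}
Imposing $u'(1)=0$ forces $A=\int_0^1(1-r)^2 y(r)\,dr$, so $A$ is uniquely determined by $y$. One further integration, keeping $u(0)$ as a free parameter, yields
\begin{equation*}
u(t)=u(0)+\frac{t^3}{6}\int_0^1(1-r)^2 y(r)\,dr-\frac16\int_0^t (t-r)^3 y(r)\,dr.
\end{equation*}
The key bookkeeping step is to merge these two integrals into a single integral over $[0,1]$: splitting the coefficient of $y(r)$ according to whether $r\le t$ or $r>t$ reproduces exactly the piecewise kernel in \eqref{eq-2.3}, giving $u(t)=u(0)+\int_0^1 G(t,s)y(s)\,ds$. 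As an independent check one verifies directly that this kernel satisfies $G(0,s)=G_t(0,s)=G_{tt}(0,s)=G_t(1,s)=0$, confirming the three pointwise conditions.

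Finally I would determine $u(0)$ from the integral condition. Substituting the representation into $u(0)=\int_0^1 a(s)u(s)\,ds$ and using $\alpha=\int_0^1 a(t)\,dt$ gives
\begin{equation*}
u(0)(1-\alpha)=\int_0^1\Big(\int_0^1 a(\tau)G(\tau,s)\,d\tau\Big)y(s)\,ds.
\end{equation*}
Since (H2) ensures $0<\alpha<1$, the factor $1-\alpha$ is nonzero, so $u(0)$ is uniquely solved for, and substituting it back produces the asserted formula. Uniqueness of the whole solution is then automatic, since every constant of integration ($A$ and $u(0)$) was forced by the data. The only genuine obstacle is the careful combination of the two integral terms into the single piecewise Green's function while keeping track of the change of integration variable; once the kernel is identified, the rest is elementary linear algebra in the two unknown constants.
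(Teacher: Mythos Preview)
Your argument is correct: integrating $u''''=-y$ four times, fixing $A=u'''(0)$ from $u'(1)=0$, identifying the piecewise kernel \eqref{eq-2.3}, and then solving the linear equation $(1-\alpha)u(0)=\int_0^1\big(\int_0^1 a(\tau)G(\tau,s)\,d\tau\big)y(s)\,ds$ via (H2) reproduces the stated formula, with uniqueness forced at each step. The paper does not supply its own proof of this lemma but cites \cite[Lemma~2.2]{Benai}; your direct-integration approach is precisely the standard derivation one expects there, so there is nothing to contrast.
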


\begin{lemma}\emph{(\cite[Lemma 2.3]{Benai})}
\label{lem 2.2} Let $\theta\in]0, \frac{1}{2}[$ be fixed. Then
\begin{itemize}
\item[(i)] $G(t, s)\geq0$, for all $t, s\in[0, 1];$
\item[(ii)] $\frac{1}{6}\theta^{3}s(1-s)^{2}\leq G(t, s)\leq \frac{1}{6}%
s(1-s)^{2}$, for all $(t, s)\in[\theta, 1-\theta]\times[0, 1]$.
\end{itemize}
\end{lemma}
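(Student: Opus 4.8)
The plan is to prove the two properties of the Green's function $G(t,s)$ defined in \eqref{eq-2.3} by working directly from its explicit piecewise formula. The two regions $0\le s\le t\le 1$ and $0\le t\le s\le 1$ must be handled separately throughout, and the fixed parameter $\theta\in]0,\tfrac12[$ enters only in part (ii) through the restriction $t\in[\theta,1-\theta]$.

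For part (i), the nonnegativity, I would first dispose of the easy region $0\le t\le s\le 1$, where $G(t,s)=\tfrac16 t^3(1-s)^2\ge 0$ is immediate. The work is in the region $0\le s\le t\le 1$, where I must show $t^3(1-s)^2-(t-s)^3\ge 0$. The natural approach is to fix $s$ and regard $g(t)=t^3(1-s)^2-(t-s)^3$ as a function of $t$ on $[s,1]$; I would compute $g'(t)=3t^2(1-s)^2-3(t-s)^2$ and check its sign, or alternatively factor the expression using the difference-of-cubes identity on $t^3$ and $(t-s)^3$. A cleaner route is probably to verify $t^3(1-s)^2\ge t^3(1-s)^3\ge(t-s)^3$, where the middle step uses $(1-s)\le 1$ and the outer comparison uses $t(1-s)\ge t-s$ (equivalently $ts\ge s$ fails in general, so this needs care) — so I expect to fall back on the monotonicity argument via $g'$.

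For part (ii), I would establish the upper and lower bounds separately. The upper bound $G(t,s)\le\tfrac16 s(1-s)^2$ is the more delicate of the two: in the region $t\le s$ I need $t^3(1-s)^2\le s(1-s)^2$, which follows from $t^3\le t\le s$; in the region $s\le t$ I need $t^3(1-s)^2-(t-s)^3\le s(1-s)^2$, and here I would again treat the left side as a function of $t$ and maximize, or bound $t^3(1-s)^2\le(1-s)^2$ won't suffice, so I expect to show the $t$-derivative has the right sign so the maximum over the relevant range occurs at a controllable endpoint. The lower bound $\tfrac16\theta^3 s(1-s)^2\le G(t,s)$ uses $t\ge\theta$: in the region $t\le s$ one has $t^3(1-s)^2\ge\theta^3 s(1-s)^2$ provided $t^3\ge\theta^3 s$, which needs $t\ge s$ — so the split into subcases must be arranged so that the bound $t\ge\theta$ combines correctly with the comparison between $t$ and $s$.

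The main obstacle I anticipate is the upper-bound half of part (ii) in the region $s\le t\le 1$, where the subtracted cubic term $(t-s)^3$ makes simple termwise estimates insufficient; the function $t\mapsto G(t,s)$ must be analyzed via its derivative to locate where it is largest, and one must verify that this maximum is dominated by $\tfrac16 s(1-s)^2$ uniformly in $t\in[\theta,1-\theta]$. I would organize the whole proof as a clean case analysis on the sign of $t-s$, reducing each inequality to an elementary polynomial comparison, and in the problematic case reduce to showing that $\partial_t G(t,s)\ge 0$ so that the extremal value is attained at $t=1-\theta$ (or at $t=s$), where the bound can be checked directly.
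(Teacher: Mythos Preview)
The paper does not prove this lemma; it is quoted verbatim from \cite[Lemma~2.3]{Benai} with no argument supplied here, so there is no ``paper's own proof'' to compare against. Your outline is essentially the standard direct verification and would succeed, but two of your hesitations are based on algebra slips that you should repair.

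First, in part~(i) for the region $s\le t$, your ``cleaner route'' does work. The inequality $t(1-s)\ge t-s$ is equivalent to $s-ts\ge 0$, i.e.\ $s(1-t)\ge 0$, which holds for all $t,s\in[0,1]$; you miswrote the equivalent form as $ts\ge s$. Hence $[t(1-s)]^3\ge(t-s)^3$, and since $(1-s)^2\ge(1-s)^3$ you get $t^3(1-s)^2\ge t^3(1-s)^3\ge(t-s)^3$, finishing (i) without any derivative computation.

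Second, the same inequality $t(1-s)\ge t-s$ shows that $\partial_t G(t,s)=\tfrac12\bigl[t^2(1-s)^2-(t-s)^2\bigr]\ge 0$ on $s\le t$, and $\partial_t G\ge 0$ is obvious on $t\le s$. Thus $G(\cdot,s)$ is nondecreasing on $[0,1]$, and since $G(1,s)=\tfrac16 s(1-s)^2$ the upper bound in (ii) holds for \emph{all} $t\in[0,1]$, not just $t\in[\theta,1-\theta]$; the restriction is only needed for the lower bound. This monotonicity is the organizing idea you were circling, and it removes the ``main obstacle'' you anticipated.

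Third, for the lower bound in the region $t\le s$ you wrote that $t^3\ge\theta^3 s$ ``needs $t\ge s$''. It does not: from $t\ge\theta$ and $s\le 1$ you get $t^3\ge\theta^3\ge\theta^3 s$ directly. For the region $s\le t$, using monotonicity it suffices to check $G(\theta,s)\ge\tfrac16\theta^3 s(1-s)^2$ when $s\le\theta$, which rearranges to $\theta^3(1-s)^3\ge(\theta-s)^3$, i.e.\ $\theta(1-s)\ge\theta-s$, i.e.\ $s(1-\theta)\ge 0$.
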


\begin{lemma}\emph{(\cite[Lemma 2.4]{Benai})}
\label{lem 2.3} Let $y(t)\in C([0, 1], [0, \infty))$ and $\theta\in]0, \frac{%
1}{2}[$. The unique solution of \eqref{eq-2.1}-\eqref{eq-2.2} is nonnegative
and satisfies
\begin{equation*}
\min_{t\in[\theta,1-\theta]}u(t)\geq \theta^{3}(1-\alpha+\beta) \|u\|,
\end{equation*}
where $\beta=\int_{\theta}^{1-\theta}a(t)dt$,\ \ $\alpha=\int_{0}^{1}a(t)dt$.
\end{lemma}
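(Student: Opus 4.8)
The plan is to exploit the explicit representation of the solution from Lemma~\ref{lem 2.1}. Writing $w(t)=\int_0^1 G(t,s)y(s)\,ds$, the formula becomes
\begin{equation*}
u(t)=w(t)+C,\qquad C:=\frac{1}{1-\alpha}\int_0^1 a(\tau)w(\tau)\,d\tau ,
\end{equation*}
and the crucial observation is that $C$ is a constant independent of $t$. Since $G\ge 0$ by Lemma~\ref{lem 2.2}(i), $a\ge 0$ and $y\ge 0$ by (H1)--(H2), and $1-\alpha>0$, every factor in $w$ and in $C$ is nonnegative, so $w\ge 0$, $C\ge 0$, and hence $u\ge 0$; this settles the nonnegativity at once. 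It then remains to estimate $\min_{[\theta,1-\theta]}u=\min_{[\theta,1-\theta]}w+C$ from below and $\|u\|=\|w\|+C$ from above.

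Next I would record two pointwise bounds on $w$. Setting $I:=\int_0^1\frac16 s(1-s)^2y(s)\,ds$, the lower estimate in Lemma~\ref{lem 2.2}(ii) gives, for every $t\in[\theta,1-\theta]$,
\begin{equation*}
w(t)=\int_0^1 G(t,s)y(s)\,ds\ge \theta^3\int_0^1\tfrac16 s(1-s)^2y(s)\,ds=\theta^3 I .
\end{equation*}
For the norm I need an upper bound valid for all $t\in[0,1]$. Here a small step beyond the cited lemmas is required: one checks that $G(\cdot,s)$ is nondecreasing in $t$, so that $G(t,s)\le G(1,s)=\frac16 s(1-s)^2$ for every $t$. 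Consequently $w$ is nondecreasing, $\|w\|=w(1)=I$, and therefore
\begin{equation*}
M:=\int_0^1 a(\tau)w(\tau)\,d\tau\le \alpha I,\qquad M\ge \int_\theta^{1-\theta}a(\tau)w(\tau)\,d\tau\ge \beta\theta^3 I .
\end{equation*}

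Finally I would assemble the estimates. The upper bound on $M$ yields
\begin{equation*}
\|u\|=I+\frac{M}{1-\alpha}\le I+\frac{\alpha I}{1-\alpha}=\frac{I}{1-\alpha},
\end{equation*}
while the lower bounds give, for $t\in[\theta,1-\theta]$,
\begin{equation*}
u(t)=w(t)+\frac{M}{1-\alpha}\ge \theta^3 I+\frac{\beta\theta^3 I}{1-\alpha}=\theta^3\,\frac{1-\alpha+\beta}{1-\alpha}\,I .
\end{equation*}
Combining the two through $I\ge(1-\alpha)\|u\|$ produces exactly $\min_{[\theta,1-\theta]}u\ge\theta^3(1-\alpha+\beta)\|u\|$. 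I expect the only genuinely delicate point to be the global upper bound $G(t,s)\le\frac16 s(1-s)^2$ on all of $[0,1]\times[0,1]$ (Lemma~\ref{lem 2.2}(ii) states it only on $[\theta,1-\theta]$), which is needed both to identify $\|w\|=I$ and to bound $M$; the remainder is the bookkeeping that collapses $\frac{\alpha}{1-\alpha}$ and $\frac{\beta\theta^3}{1-\alpha}$ into the clean cone constant $\theta^3(1-\alpha+\beta)$.
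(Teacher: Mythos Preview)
Your argument is correct and follows the same route the paper takes (Lemma~\ref{lem 2.3} itself is merely cited from \cite{Benai}, but the identical computation is reproduced for $Au$ in \eqref{eq-3.2}--\eqref{eq-3.4}): bound $G$ above by $g(s)=\tfrac16 s(1-s)^2$ globally and below by $\theta^3 g(s)$ on $[\theta,1-\theta]$, then collapse the constants to reach $\theta^3(1-\alpha+\beta)\|u\|$. Your decomposition $u=w+C$ is a harmless reorganization of the same bookkeeping, and you correctly flag that the global upper bound $G(t,s)\le g(s)$ for all $t\in[0,1]$ is the one ingredient not literally contained in Lemma~\ref{lem 2.2}(ii) as stated but is needed (and is used by the paper in \eqref{eq-3.2}, with a reference back to the proofs in \cite{Benai}).
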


\section{Existence results}

In this section, we will state and prove our main results.

Let $\theta\in]0, \frac{1}{2}[$, $X=C([0, 1], \mathbb{R})$, $%
\beta=\int_{\theta}^{1-\theta}a(t)dt$, $\alpha=\int_{0}^{1}a(t)dt$, and
define the cone
\begin{equation*}
K=\left\{u\in X, u\geq0: \min_{t\in[\theta,1-\theta]}u(t)\geq
\theta^{3}(1-\alpha+\beta) \|u\|\right\}.
\end{equation*}
From lemmas \eqref{eq-2.1}, \eqref{eq-2.2}, and \eqref{eq-2.3}, the function
$u$ is a positive solution of the boundary value problem \eqref{eq-1.1} and %
\eqref{eq-1.2} if and only if $u(t)$ is a fixed point of the operator

\begin{equation}  \label{eq-3.1}
Au(t):=\int_{0}^{1}\Big(G(t, s)+\frac{1}{1-\alpha}\int_{0}^{1}a(\tau)G(\tau,
s)d\tau\Big)f(u(s))ds.
\end{equation}

\begin{theorem}\label{thm 3.1}
 Assume that $f_{0}=0$. Then BVP \eqref{eq-1.1} and %
\eqref{eq-1.2} has at least one positive solution.
\end{theorem}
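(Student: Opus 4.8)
The plan is to apply the Leray-Schauder alternative (Lemma~\ref{lem 1.1}) to the integral operator $A$ defined in \eqref{eq-3.1}, taking $\Omega = K$ and $X = C([0,1],\mathbb{R})$. First I would verify the hypotheses of the alternative: that $K$ is a convex subset of the Banach space $X$ containing $0$ (immediate from the definition of the cone), that $A$ maps $K$ into $K$ (this follows from the nonnegativity in Lemma~\ref{lem 2.2}(i) together with the cone estimate in Lemma~\ref{lem 2.3}, exactly as these lemmas were designed to guarantee), and that $A$ is completely continuous. The complete continuity is a standard Arzel\`a--Ascoli argument: continuity of $A$ follows from the continuity of $f$ via dominated convergence, and the uniform boundedness and equicontinuity of $A(B)$ for bounded $B$ follow from the boundedness and uniform continuity of the kernel $G(t,s) + \tfrac{1}{1-\alpha}\int_0^1 a(\tau)G(\tau,s)\,d\tau$ on the compact square.

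With the hypotheses in place, the alternative says that either $A$ has a fixed point in $K$ (which would be the desired positive solution, and we would be done) or the set $S = \{u \in K : u = \lambda Au,\ 0 < \lambda < 1\}$ is unbounded. The strategy is therefore to rule out alternative (ii) by showing $S$ is bounded, which forces alternative (i). So I would assume $u \in S$, i.e. $u = \lambda Au$ for some $\lambda \in (0,1)$, and derive an a priori bound $\|u\| \le M$ with $M$ independent of $u$ and $\lambda$.

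The key quantitative input is the hypothesis $f_0 = \lim_{u\to 0^+} f(u)/u = 0$. Here I would exploit that $f_0 = 0$ gives, for every $\varepsilon > 0$, a constant so that $f(u) \le \varepsilon\, u$ for small $u$; but to get a global linear-type bound I expect to need a growth control of the form $f(u) \le \varepsilon u + C_\varepsilon$ valid for all $u \ge 0$, choosing $\varepsilon$ small relative to the operator's kernel mass. Concretely, from $u = \lambda Au$ and $0 < \lambda < 1$ one estimates
\begin{equation*}
\|u\| \le \|Au\| \le \Big(\sup_{t\in[0,1]}\int_0^1 \big(G(t,s) + \tfrac{1}{1-\alpha}\textstyle\int_0^1 a(\tau)G(\tau,s)\,d\tau\big)\,ds\Big)\,\big(\varepsilon \|u\| + C_\varepsilon\big).
\end{equation*}
Writing the bracketed supremum as a finite constant $\Lambda$ (finite because $G$ is bounded and $0 < \alpha < 1$ by (H2)), choosing $\varepsilon$ with $\varepsilon \Lambda < 1$ yields $\|u\|(1 - \varepsilon\Lambda) \le \Lambda C_\varepsilon$, hence the uniform bound $\|u\| \le \Lambda C_\varepsilon/(1-\varepsilon\Lambda) =: M$.

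The main obstacle, and the step deserving the most care, is making the passage from the pointwise limit $f_0 = 0$ to a usable global estimate $f(u) \le \varepsilon u + C_\varepsilon$. The limit $f_0 = 0$ only controls $f$ near the origin and says nothing about the growth of $f$ at infinity, so in general such a global affine bound need \emph{not} hold for an arbitrary continuous $f$. I would therefore scrutinize whether an additional growth hypothesis is implicitly needed, or whether the argument must instead be localized: one natural route is to apply Leray-Schauder on a ball $\Omega = K \cap \overline{B_r}$ (so that only the near-origin behavior of $f$ is relevant), where for $u \in K$ with $\|u\| = r$ small one has $f(u(s)) \le \varepsilon u(s)$ and the computation above gives $\|Au\| < \|u\|$, contradicting $u = \lambda Au$ with $\lambda \in (0,1)$ on the boundary and thereby eliminating alternative (ii) on the ball. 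This localized version avoids any uncontrolled behavior of $f$ at infinity and is the cleaner path; I would check which formulation the paper's kernel constants actually support and present that one.
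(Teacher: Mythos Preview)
Your localized route is exactly what the paper does. The paper never attempts the global argument on all of $K$: from the outset it fixes $\rho_{1}>0$ so that $f(u)\le\epsilon u$ for $0<u\le\rho_{1}$ (with $\epsilon\le 1-\alpha$), sets $\Omega=\{u\in K:\|u\|\le\rho_{1}\}$, and shows $A\Omega\subset\Omega$ directly via
\[
\|Au\|\;\le\;\frac{1}{1-\alpha}\int_{0}^{1}g(s)f(u(s))\,ds\;\le\;\frac{\epsilon}{6(1-\alpha)}\,\|u\|\;\le\;\|u\|\;\le\;\rho_{1},
\]
where $g(s)=\tfrac{1}{6}s(1-s)^{2}$. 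Since $\Omega$ is already bounded, the set $\mathcal{V}=\{u\in\Omega:u=\lambda Au,\ 0<\lambda<1\}$ is trivially bounded, so alternative (ii) of Lemma~\ref{lem 1.1} is excluded automatically and a fixed point follows. In effect this is just Schauder on a small ball; the Leray--Schauder dichotomy is invoked but does no real work. Your instinct that the global bound $f(u)\le\varepsilon u+C_{\varepsilon}$ is unjustified from $f_{0}=0$ alone is correct, and the paper avoids it entirely by localizing.

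One point where your phrasing drifts: you describe the localized step as ``contradicting $u=\lambda Au$ on the boundary''. That is the formulation for the degree-theoretic version of Leray--Schauder on a ball (where one checks $u\ne\lambda Au$ on $\partial\Omega$), but the lemma stated in the paper requires $A:\Omega\to\Omega$ as a self-map of a convex set. So the needed check is $\|Au\|\le\rho_{1}$ for \emph{every} $u\in\Omega$, not merely on the sphere; your estimate already delivers this, since $f(u(s))\le\epsilon\,u(s)$ holds throughout $0\le u(s)\le\rho_{1}$. With that adjustment your sketch coincides with the paper's proof.
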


\begin{proof}

Since $f_{0}=0$, there exists $\rho_{1}>0$ such that $f(u)\leq \epsilon u$,
for $0<u\leq \rho_{1}$, where $\epsilon>0$ satisfies
\begin{equation*}
{\epsilon}\leq {1-\alpha}.
\end{equation*}

If we denote

\begin{equation*}
\Omega=\Big\{u\in K,\ \|u\|\leq\rho_{1}\Big\},
\end{equation*}
Then $\Omega$ is a convex subset of $X$.

For $u\in\Omega$, according to the proofs of lemmas \ref{lem 2.2} and \ref%
{lem 2.3}, we have
\begin{gather}  \label{eq-3.2}
\begin{aligned} Au(t)&=\int_{0}^{1}\Big(G(t,
s)+\frac{1}{1-\alpha}\int_{0}^{1}a(\tau)G(\tau, s)d\tau\Big)f(u(s))ds\\
&\leq\int_{0}^{1}\Big(g(s)+\frac{1}{1-\alpha}\int_{0}^{1}g(s)a(\tau)d\tau%
\Big)f(u(s))ds\\ &=\frac{1}{1-\alpha}\int_{0}^{1}g(s)f(u(s))ds,\ \
t\in[0,1], \end{aligned}
\end{gather}
where $g(s)=\frac{1}{6}s(1-s)^{2}$.

So,
\begin{equation}  \label{eq-3.3}
\|Au\|\leq \frac{1}{1-\alpha}\int_{0}^{1}g(s)f(u(s))ds.
\end{equation}
In view of lemma \ref{lem 2.3} and \eqref{eq-3.3}, we have $Au(t)\geq0$ and

\begin{gather}
\begin{aligned} Au(t)&=\int_{0}^{1}\Big(G(t,
s)+\frac{1}{1-\alpha}\int_{0}^{1}a(\tau)G(\tau, s)d\tau\Big)f(u(s))ds\\
&\geq\theta^{3}\int_{0}^{1}\Big(g(s)+\frac{1}{1-\alpha}\int_{\theta}^{1-%
\theta}g(s)a(\tau)d\tau\Big)f(u(s))ds\\
&=\theta^{3}\frac{1-\alpha+\beta}{1-\alpha}\int_{0}^{1}g(s)f(u(s))ds\\
&\geq\theta^{3}(1-\alpha+\beta)\|Au\|, \ \ t\in[\theta, 1-\theta].
\end{aligned}  \label{eq-3.4}
\end{gather}
Hence,
\begin{equation*}
\min_{t\in[\theta,1-\theta]}Au(t)\geq \theta^{3}(1-\alpha+\beta) \|Au\|.
\end{equation*}
On the other hand,
\begin{gather}
\begin{aligned} Au(t)&\leq\frac{1}{1-\alpha}\int_{0}^{1}g(s)f(u(s))ds\\
&\leq\frac{1}{1-\alpha}\epsilon\|u\|\int_{0}^{1}g(s)ds\\
&\leq\frac{1}{6(1-\alpha)}\epsilon\|u\|\\ &\leq \|u\|\leq \rho_{1}.
\end{aligned}  \label{eq-3.5}
\end{gather}
Thus, $\|Au\|\leq\rho_{1}$. Hence $A\Omega\subset\Omega$. $%
A:\Omega\rightarrow\Omega$ is completely continuous by an application of
Arzela-Ascoli theorem.
\end{proof}

For $u\in \mathcal{V}$ with
\begin{equation*}
\mathcal{V}=\Big\{u\in\Omega / \ u=\lambda Au,\ 0<\lambda<1\Big\},
\end{equation*}
we have
\begin{equation*}
u(t)=\lambda Au(t)< Au(t)\leq \rho_{1},
\end{equation*}

which implies that $\|u\|\leq\rho_{1}$.

So, $\mathcal{V}$ is bounded. By Lemma \ref{lem 1.1}, the operator $A$ has at
least one fixed point in $\Omega$, which is a positive solution of %
\eqref{eq-1.1} and \eqref{eq-1.2}.

\begin{theorem}\label{thm 3.2}
If $f_{\infty}=0$, then BVP \eqref{eq-1.1} and \eqref{eq-1.2}
has at least one positive solution.
\end{theorem}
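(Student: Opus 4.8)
The plan is to mirror the structure of the proof of Theorem \ref{thm 3.1}, replacing the control of $f$ near the origin by control near infinity and adding a constant to absorb the uncontrolled behaviour of $f$ on bounded sets. First I would exploit the hypothesis $f_{\infty}=0$: fixing $\epsilon>0$ with $\epsilon\le 1-\alpha$, there exists $N>0$ such that $f(u)\le \epsilon u$ for all $u\ge N$. Since $f$ is continuous on $[0,\infty)$ by (H1), it is bounded on the compact set $[0,N]$; setting $M=\max_{0\le u\le N}f(u)$ I obtain the global affine bound $f(u)\le \epsilon u+M$ for every $u\ge 0$.

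Next I would set up the Leray-Schauder machinery exactly as before. Let $\Omega=\{u\in K:\ \|u\|\le R\}$, which is a convex subset of $X$ containing $0$, where $R>0$ is to be fixed. The same computation as in \eqref{eq-3.2}--\eqref{eq-3.3}, using Lemma \ref{lem 2.2} and $\int_{0}^{1}g(s)\,ds=\tfrac16$, gives for $u\in\Omega$
\[
\|Au\|\le \frac{1}{1-\alpha}\int_{0}^{1}g(s)f(u(s))\,ds\le \frac{1}{6(1-\alpha)}\bigl(\epsilon R+M\bigr),
\]
while the cone estimate of Lemma \ref{lem 2.3} shows $Au\in K$ just as in \eqref{eq-3.4}. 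The key point is that, because $\epsilon\le 1-\alpha$, the linear term is absorbed: $\tfrac{\epsilon R}{6(1-\alpha)}\le \tfrac{R}{6}$, so $\|Au\|\le \tfrac{R}{6}+\tfrac{M}{6(1-\alpha)}$. Choosing $R\ge \tfrac{M}{5(1-\alpha)}$ forces $\|Au\|\le R$, hence $A\Omega\subset\Omega$; complete continuity of $A$ then follows from the Arzela-Ascoli theorem as in Theorem \ref{thm 3.1}.

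Finally I would verify the alternative in Lemma \ref{lem 1.1}. For $u\in\mathcal{V}=\{u\in\Omega:\ u=\lambda Au,\ 0<\lambda<1\}$ one has $u(t)=\lambda Au(t)\le Au(t)\le R$ for every $t\in[0,1]$, so $\|u\|\le R$ and $\mathcal{V}$ is bounded. Since option (ii) of the alternative is excluded, $A$ has a fixed point in $\Omega$, which is the desired positive solution of \eqref{eq-1.1}--\eqref{eq-1.2}.

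The main obstacle, and the only essential departure from the argument of Theorem \ref{thm 3.1}, is the passage from the asymptotic bound $f(u)\le \epsilon u$, valid only for large $u$, to a bound valid for all $u\ge 0$. This is handled by the additive constant $M$ arising from continuity of $f$ on $[0,N]$; everything then hinges on choosing the radius $R$ large enough that the constant term is absorbed, which is possible precisely because the coefficient $\tfrac{\epsilon}{6(1-\alpha)}\le \tfrac16<1$ leaves slack on the right-hand side.
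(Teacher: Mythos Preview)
Your argument is correct and follows the same Leray--Schauder strategy as the paper, but you package it more cleanly. The paper splits into two cases: if $f$ is bounded by some $L$, one bounds $\|Au\|\le \frac{L}{6(1-\alpha)}$ directly on all of $K$; if $f$ is unbounded, one uses $f_\infty=0$ to get $f(u)\le\eta u$ for $u>\rho_2$ and continuity to get $f(u)\le\eta\sigma$ on $[0,\rho_2]$, then takes $\widehat\rho_2=\max\{\sigma,\rho_2\}$ so that $f(u)\le\eta\widehat\rho_2$ whenever $\|u\|\le\widehat\rho_2$. Your single affine bound $f(u)\le\epsilon u+M$ subsumes both cases at once and leads to the same self-mapping estimate after choosing $R$ large; this is a genuine simplification, at no cost.

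One small slip: $\int_0^1 g(s)\,ds=\frac{1}{6}\int_0^1 s(1-s)^2\,ds=\frac{1}{72}$, not $\frac16$. You only use the inequality $\int_0^1 g(s)\,ds\le\frac16$ (which follows from $s(1-s)^2\le 1$, and is exactly what the paper uses in \eqref{eq-3.5}), so the argument is unaffected, but the stated equality should be replaced by $\le$.
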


\begin{proof}
We discuss two possible cases:\

Case 1. If $f$ is bounded. Then, there exists $L>0$ such that $f(u)\leq L$.

For $u\in K$, we have $Au\in K$, and $A$ is completely continuous. Similar
to the estimates of \eqref{eq-3.2}, we obtain

\begin{gather}  \label{eq-3.6}
\begin{aligned} Au(t)&\leq\frac{1}{1-\alpha}\int_{0}^{1}g(s)f(u(s))ds\\
&\leq\frac{L}{1-\alpha}\int_{0}^{1}g(s)ds\\ &\leq\frac{L}{6(1-\alpha)}.
\end{aligned}
\end{gather}
Thus, $\|Au\|\leq \frac{L}{6(1-\alpha)}$. For $u\in \mathcal{V}$ with
\begin{equation*}
\mathcal{V}=\Big\{u\in K / \ u=\lambda Au,\ 0<\lambda<1\Big\},
\end{equation*}
we have
\begin{equation*}
u(t)=\lambda Au(t)< Au(t)\leq \frac{L}{6(1-\alpha)},
\end{equation*}

which implies that $\|u\|\leq \frac{L}{6(1-\alpha)}$.

So, $\mathcal{V}$ is bounded. By Lemma \ref{lem 1.1}, the operator $A$ has at
least one fixed point in $K$, which is a positive solution of \eqref{eq-1.1}
and \eqref{eq-1.2}.

Case 2. Suppose that $f$ is unbounded, since $f_{\infty}=0$, there exists ${%
\rho}_{2}>0$ such that $f(u)\leq \eta u$ for $u>{\rho}_{2}$, where $\eta>0$
satisfies
\begin{equation*}
{\eta}\leq 1-\alpha.
\end{equation*}
On the other hand, from condition \textrm{(H1)}, there is $\sigma>0$ such
that $f(u)\leq \eta\sigma$,\ \ with $0\leq u\leq {\rho}_{2}$.

Now, set
\begin{equation*}
\Omega=\Big\{u\in K,\ \|u\|\leq \widehat{\rho}_{2}\Big\},
\end{equation*}
where $\widehat{\rho}_{2}=\max\{\sigma,\rho_{2}\}$

If $u\in \Omega$, then we have $f(u)\leq\eta \widehat{\rho}_{2}$. Similar to %
\eqref{eq-3.2}, we have
\begin{gather}  \label{eq-3.7}
\begin{aligned} Au(t)&\leq\frac{1}{1-\alpha}\int_{0}^{1}g(s)f(u(s))ds\\
&\leq\frac{1}{6}\cdot\frac{\eta\widehat{\rho}_{2} }{1-\alpha}\\
&\leq\widehat{\rho}_{2}. \end{aligned}
\end{gather}
Thus, $\|Au\|\leq \widehat{\rho}_{2}$.

It is easy to check that $\mathcal{V}=\Big\{u\in\Omega / u=\lambda Au,\
0<\lambda<1\Big\}$ is bounded. Therefore, by Lemma \ref{lem 1.1}, the
boundary value problem \eqref{eq-1.1} and \eqref{eq-1.2} has at least one
positive solution.
\end{proof}

\section{Examples}

\begin{exmp}
Consider the fourth-order boundary value

\begin{equation}  \label{eq-4.1}
{u^{\prime \prime \prime \prime }}(t)+u(1-e^{-u})=0, \ \ 0<t<1,
\end{equation}
\begin{equation}  \label{eq-4.2}
u^{\prime}(0)=u^{\prime}(1)=u^{\prime \prime}(0)=0, \
u(0)=\int_{0}^{1}s^{2}u(s)ds,
\end{equation}
where $f(u)=u(1-e^{-u})\in C([0,\infty),[0,\infty))$ and $a(t)=t^{2}\geq0$, $%
\int_{0}^{1}a(s)ds=\int_{0}^{1}s^{2}ds=\frac{1}{3}$.

We have
\begin{equation*}
f_{0}=\lim_{u\to 0+}\frac{f(u)}{u}=\lim_{u\to 0+}(1-e^{-u})=0.
\end{equation*}
Thus, it follows from Theorem \ref{thm 3.1} that the problem \eqref{eq-4.1}
and \eqref{eq-4.2} has at least one positive solution.
Notice that $f_{\infty}=1$, $f_{\infty}\neq\infty$, so Theorem 3.1 in \cite%
{Benai} cannot be applied to show the existence of positive solutions for
the problem \eqref{eq-4.1} and \eqref{eq-4.2}.
\end{exmp}

\begin{exmp}
Consider the fourth-order boundary value

\begin{equation}  \label{eq-4.3}
{u^{\prime \prime \prime \prime }}(t)+1-e^{-u}=0, \ \ 0<t<1,
\end{equation}
\begin{equation}  \label{eq-4.4}
u^{\prime}(0)=u^{\prime}(1)=u^{\prime \prime}(0)=0, \
u(0)=\int_{0}^{1}s^{2}u(s)ds,
\end{equation}
where $f(u)=1-e^{-u}\in C([0,\infty),[0,\infty))$ and $a(t)=t^{2}\geq0$, $%
\int_{0}^{1}a(s)ds=\int_{0}^{1}s^{2}ds=\frac{1}{3}$.

Since
\begin{equation*}
f_{\infty}=\lim_{u\to+\infty}\frac{f(u)}{u}=\lim_{u\to+\infty}\frac{1-e^{-u}%
}{u}=0,
\end{equation*}
From Theorem \ref{thm 3.2}, the problem \eqref{eq-4.3} and \eqref{eq-4.4}
has at least one positive solution.

On the other hand, we have
\begin{equation*}
f_{0}=\lim_{u\to +0}\frac{f(u)}{u}=\lim_{u\to +0}\frac{1-e^{-u}}{u}=1.
\end{equation*}
therefore Theorem 3.2 in \cite{Benai} also cannot be applied to show the
existence of positive solutions for the problem \eqref{eq-4.3} and \eqref{eq-4.4}.
\end{exmp}


\end{document}